\theoremstyle{plain}
\newtheorem{thm}{Theorem}[section]
\newtheorem{lem}[thm]{Lemma}
\newtheorem{cor}[thm]{Corollary}
\newcommand{\f}{\operatorname{f}}
\newcommand{\E}{\operatorname{\mathbb{E}}}
\newcommand{\degi}{\operatorname{deg_>}}
\newcommand{\ex}{\operatorname{ex}}
\title{A generalization of the K\H{o}v\'{a}ri-S\'{o}s-Tur\'{a}n theorem}
\date{}
\author{Jesse Geneson\\
\small\tt geneson@gmail.com
}
\begin{document}
\maketitle

\begin{abstract} 
We present a new proof of the K\H{o}v\'{a}ri-S\'{o}s-Tur\'{a}n theorem that $\ex(n, K_{s,t}) = O(n^{2-1/t})$ for $s, t \geq 2$. The new proof is elementary, avoiding the use of convexity. For any $d$-uniform hypergraph $H$, let $\ex_d(n,H)$ be the maximum possible number of edges in an $H$-free $d$-uniform hypergraph on $n$ vertices. Let $K_{H, t}$ be the $(d+1)$-uniform hypergraph obtained from $H$ by adding $t$ new vertices $v_1, \dots, v_t$ and replacing every edge $e$ in $E(H)$ with $t$ edges $e \cup \left\{v_1\right\},\dots, e \cup \left\{v_t\right\}$ in $E(K_{H, t})$. If $H$ is the $1$-uniform hypergraph on $s$ vertices with $s$ edges, then $K_{H, t} = K_{s, t}$. \

We prove that $\ex_{d+1}(n,K_{H,t}) = O(\ex_d(n, H)^{1/t} n^{d+1-d/t} + t n^d)$ for any $d$-uniform hypergraph $H$ with at least two edges such that $\ex_d(n, H) = o(n^d)$. Thus $\ex_{d+1}(n,K_{H,t}) = O(n^{d+1-1/t})$ for any $d$-uniform hypergraph $H$ with at least two edges such that $\ex_d(n, H) = O(n^{d-1})$, which implies the K\H{o}v\'{a}ri-S\'{o}s-Tur\'{a}n theorem in the $d = 1$ case. This also implies that $\ex_{d+1}(n, K_{H,t}) = O(n^{d+1-1/t})$ when $H$ is a $d$-uniform hypergraph with at least two edges in which all edges are pairwise disjoint, which generalizes an upper bound proved by Mubayi and Verstra\"{e}te (JCTA, 2004). We also obtain analogous bounds for 0-1 matrix Tur\'{a}n problems.
\end{abstract}

\section{Introduction}
The K\H{o}v\'{a}ri-S\'{o}s-Tur\'{a}n theorem is one of the most famous results in extremal combinatorics \cite{kst, fur1, fur}. The theorem states that the maximum number of edges in a $K_{s, t}$-free graph of order $n$ is $O(n^{2-1/t})$. There are multiple known proofs of this theorem, including a standard double-counting proof that uses Jensen's inequality, as well as a proof that uses dependent random choice and Jensen's inequality \cite{aks}. For a student to fully understand past proofs of the K\H{o}v\'{a}ri-S\'{o}s-Tur\'{a}n theorem, they would need to understand convexity, which would require calculus background.

In this paper, we prove the K\H{o}v\'{a}ri-S\'{o}s-Tur\'{a}n theorem without using calculus or Jensen's inequality. Instead we use a method based on Nivasch's bounds on Davenport-Schinzel sequences \cite{niv} and Alon et al.'s bounds on interval chains \cite{aknss}. This new proof gives a simple way to teach the proof of the K\H{o}v\'{a}ri-S\'{o}s-Tur\'{a}n theorem to students with no calculus background, and the same method can be used to prove a generalization of the K\H{o}v\'{a}ri-S\'{o}s-Tur\'{a}n theorem for uniform hypergraphs.

In \cite{niv}, Nivasch found upper bounds on the maximum possible lengths of Davenport-Schinzel sequences using two different methods. Both methods gave the same bounds, but the first method was more like the proofs in past papers on Davenport-Schinzel sequences, and the second method was similar to proofs about interval chains in \cite{aknss}. The second method in \cite{niv} was much simpler than the first for proving bounds on Davenport-Schinzel sequences, so we imitate the second method here for graph and hypergraph Tur\'{a}n problems.

Let $\ex_d(n, H)$ denote the maximum number of edges in an $H$-free $d$-uniform hypergraph on $n$ vertices. Let $K_{H, t}$ be the $(d+1)$-uniform hypergraph obtained from $H$ by adding $t$ new vertices $v_1, \dots, v_t$ and replacing every edge $e$ in $E(H)$ with $e \cup \left\{v_1\right\},\dots, e \cup \left\{v_t\right\}$ in $E(K_{H, t})$. For example, if $H$ is the $1$-uniform hypergraph of order $s$ with $s$ edges, then $K_{H, t} = K_{s, t}$. Mubayi and Verstra\"{e}te \cite{MV} proved that $\ex_3(n, K_{H,t}) = O(n^{3-1/t})$ when $H$ is a $2$-uniform hypergraph in which all edges are pairwise disjoint.

In Section \ref{s:ds}, we provide an elementary proof that $\ex_{d+1}(n,K_{H,t}) = O(\ex_d(n, H)^{1/t} n^{d+1-d/t} + t n^d)$ for any $d$-uniform hypergraph $H$ with at least two edges such that $\ex_d(n, H) = o(n^{d})$, giving an alternative proof of the K\H{o}v\'{a}ri-S\'{o}s-Tur\'{a}n theorem when $H$ is the $1$-uniform hypergraph of order $s$ with $s$ edges. As a corollary, this implies that $\ex_{d+1}(n, K_{H,t}) = O(n^{d+1-1/t})$ when $H$ is a $d$-uniform hypergraph with at least two edges in which all edges are pairwise disjoint, generalizing the upper bound of Mubayi and Verstra\"{e}te. In Section \ref{s:01}, we discuss analogous results about $d$-dimensional 0-1 matrices that can be proved with similar methods.

\section{The letter method}\label{s:ds}

An \emph{ordered} $d$-uniform hypergraph is a $d$-uniform hypergraph with a linear order on the vertices. We define a \emph{lettered} $d$-uniform hypergraph as the structure obtained from labeling each edge of an ordered $d$-uniform hypergraph with a letter such that two edges can be labeled with the same letter only if they have the same greatest vertex. Given a $d$-uniform hypergraph $H$, we say that a lettered $d$-uniform hypergraph is $H$-free if its underlying $d$-uniform hypergraph is $H$-free.

For any $d$-uniform hypergraph $H$, let $\f_{d}(n, k, H)$ denote the maximum possible number of distinct letters in an $H$-free lettered $d$-uniform hypergraph on $n$ vertices in which every letter occurs at least $k$ times.

The next lemma is analogous to inequalities in \cite{niv, ck, gseq, ff} and is proved similarly.

\begin{lem}\label{klem}
For all positive integers $n, k$ and $d$-uniform hypergraphs $H$, we have $\ex_{d}(n, H) \leq k(\f_{d}(n, k, H)+n)$.
\end{lem}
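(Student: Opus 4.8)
The plan is to take an arbitrary $H$-free $d$-uniform hypergraph $G$ on the vertex set $[n]$ with $\ex_d(n,H)$ edges, fix the natural linear order on $[n]$, and construct from $G$ a lettered $d$-uniform hypergraph by a greedy labeling that keeps the number of distinct letters small while guaranteeing that almost every letter is used at least $k$ times. First I would group the edges of $G$ by their greatest vertex: for each $v \in [n]$, let $E_v$ be the set of edges whose maximum vertex is $v$. Within each class $E_v$, the key point is that any two of its edges are \emph{allowed} to receive the same letter, since the definition of a lettered hypergraph only forbids equal letters on edges with different greatest vertices. So I would partition each $E_v$ into $\lfloor |E_v|/k \rfloor$ blocks of exactly $k$ edges plus one leftover block of size $< k$, assign a fresh letter to each full block of $k$ edges, and simply delete (or leave unlabeled) the at most $k-1$ leftover edges in each class.

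After this, every letter that appears does so exactly $k$ times, so the resulting structure is a legitimate $H$-free lettered $d$-uniform hypergraph with every letter occurring at least $k$ times (it is still $H$-free because its underlying hypergraph is a subgraph of $G$). Hence the number of distinct letters is at most $\f_d(n,k,H)$. On the other hand, the number of distinct letters is $\sum_{v}\lfloor |E_v|/k\rfloor \ge \sum_v (|E_v| - (k-1))/k = (\ex_d(n,H) - n(k-1))/k$, using that there are at most $n$ nonempty classes $E_v$. Rearranging gives $\ex_d(n,H) \le k\,\f_d(n,k,H) + n(k-1) \le k(\f_d(n,k,H) + n)$, which is the claimed inequality.

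I expect the only real subtlety to be bookkeeping: making sure that the definition of "lettered hypergraph" genuinely permits all $k$ edges of a block within a single $E_v$ to share a letter (it does, because they share the greatest vertex $v$), and handling the edge cases where some $|E_v| < k$, in which case that class contributes no letters but also loses only fewer than $k$ edges, still within the $n(k-1)$ slack. There is no convexity or averaging inequality needed here — just the floor estimate $\lfloor m/k\rfloor \ge (m-k+1)/k$ summed over the at most $n$ classes — so the argument is entirely elementary, matching the spirit of the paper.
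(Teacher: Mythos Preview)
Your proposal is correct and follows essentially the same approach as the paper's proof: group edges by their greatest vertex, assign a fresh letter to each full block of $k$ edges within a group, delete the fewer than $k$ leftover edges per group, and then compare the resulting letter count to $\f_d(n,k,H)$. The only difference is cosmetic---you make the final arithmetic $\sum_v \lfloor |E_v|/k\rfloor \ge (\ex_d(n,H)-n(k-1))/k$ explicit, whereas the paper leaves that step to the reader.
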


\begin{proof}
Start with a $d$-uniform $H$-free hypergraph $Q$ with $\ex_{d}(n, H)$ edges. Order the vertices of $Q$ arbitrarily. For each vertex $v$ in $V(Q)$ in order from greatest to least, label the unlabeled edges adjacent to $v$ in any order with letters $v_0, v_1, \dots$, only using each letter $v_i$ exactly $k$ times and deleting up to $k-1$ remaining edges adjacent to $v$ if $k$ does not divide the total number of edges in which $v$ is the greatest vertex. Observe that the new lettered hypergraph has at most $\f_{d}(n, k, H)$ distinct letters with every letter occurring exactly $k$ times, and it is $H$-free.
\end{proof}

When combined with Lemma \ref{klem}, the next lemma will complete our proof of the generalization of the K\H{o}v\'{a}ri-S\'{o}s-Tur\'{a}n theorem. We use Stirling's bound in the proof of the next lemma, but it is not actually necessary. We explain after the proof how the use of Stirling's bound can be replaced with an elementary one-sentence argument. 

\begin{lem}\label{proofg}
For $t \geq 2$, $H$ a $d$-uniform hypergraph with at least two edges such that $\ex_d(n, H) = o(n^{d})$, and $k = \lceil 2 e n^{d(1-1/t)} \ex_d(n, H)^{1/t} \rceil$, we have $\f_{d+1}(n, k, K_{H,t}) =O(t (\frac{n^d}{\ex_d(n, H)})^{1/t})$.
\end{lem}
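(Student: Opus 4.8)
The plan is to recast the statement as a bound on a set system and then close it by a double count in which the textbook convexity (Jensen) step is replaced by an elementary rearrangement. Fix a $K_{H,t}$-free lettered $(d+1)$-uniform hypergraph on $[n]$ in which every letter occurs at least $k$ times, and let $L$ be its number of letters; the goal is $L=O(t(n^d/\ex_d(n,H))^{1/t})$. A letter $\ell$ has a greatest vertex $w_\ell$, and the edges carrying $\ell$ are precisely the sets $f\cup\{w_\ell\}$ with $f$ ranging over a family $\mathcal F_\ell$ of $d$-subsets of $\{1,\dots,w_\ell-1\}$, where $|\mathcal F_\ell|\ge k$. Thus a letter is encoded by a pair (top vertex, family of at least $k$ $d$-sets strictly below it), and, writing $\deg(f)$ for the number of letters $\ell$ with $f\in\mathcal F_\ell$, we have $\sum_{f}\deg(f)=\sum_\ell|\mathcal F_\ell|\ge kL$, the sum being over the at most $n^d$ many $d$-subsets of $[n]$.

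The only consequence of $K_{H,t}$-freeness I would use is: \emph{any $t$ distinct letters $\ell_1,\dots,\ell_t$ satisfy $|\mathcal F_{\ell_1}\cap\dots\cap\mathcal F_{\ell_t}|\le\ex_d(n,H)$.} Two distinct letters with the same greatest vertex carry disjoint edge sets, hence have disjoint families; so any $t$ distinct letters that share even one $d$-set automatically have $t$ distinct top vertices. If such letters shared more than $\ex_d(n,H)$ $d$-sets, then $\mathcal F_{\ell_1}\cap\dots\cap\mathcal F_{\ell_t}$ would contain a copy $H'$ of $H$; since $V(H')$ lies strictly below every $w_{\ell_i}$, putting $v_j:=w_{\ell_j}$ makes $\{\,e\cup\{v_j\}:e\in E(H'),\,1\le j\le t\,\}$ a copy of $K_{H,t}$, a contradiction. (The hypothesis that $H$ has at least two edges enters only here, to guarantee $\ex_d(n,H)\ge 1$ so that $k$ is well defined; and $\ex_d(n,H)=o(n^d)$ is what makes $k=o(n^d)$, so that letters of multiplicity $\ge k$ exist at all.)

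Now sum the boxed inequality over all $t$-subsets of the $L$ letters and switch the order of summation:
\[
\sum_{f}\binom{\deg(f)}{t}\;=\;\sum_{\{\ell_1,\dots,\ell_t\}}\bigl|\mathcal F_{\ell_1}\cap\dots\cap\mathcal F_{\ell_t}\bigr|\;\le\;\ex_d(n,H)\binom{L}{t}.
\]
It remains to bound the left side from below. Rather than Jensen, I would invoke the one-line rearrangement fact that, for a fixed total, $\sum_f\binom{\deg(f)}{t}$ is smallest when the $\deg(f)$ are as equal as possible (replacing $a\ge b+2$ by $a-1,b+1$ changes the sum by $\binom{a-1}{t-1}-\binom{b}{t-1}\le 0$); together with $\sum_f\deg(f)\ge kL$ over at most $n^d$ terms this gives a lower bound of order $(kL)^t/\bigl(2^t t!\,n^{d(t-1)}\bigr)$ once $kL/n^d\ge 2t$. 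Comparing with $\ex_d(n,H)\binom{L}{t}\le\ex_d(n,H)L^t/t!$ then forces $k^t\le 2^t\ex_d(n,H)\,n^{d(t-1)}$, which contradicts $k\ge 2e\,n^{d(1-1/t)}\ex_d(n,H)^{1/t}$ since the latter gives $k^t\ge (2e)^t\ex_d(n,H)\,n^{d(t-1)}>2^t\ex_d(n,H)\,n^{d(t-1)}$. Hence $kL/n^d< 2t$, i.e. $L<2tn^d/k=O\!\bigl(t(n^d/\ex_d(n,H))^{1/t}\bigr)$. The factorials above are exactly what Stirling's bound is being used to tidy; they vanish if one runs the identical computation with the falling factorials $(x)_t=x(x-1)\cdots(x-t+1)$, for which $\sum_f(\deg(f))_t\le\ex_d(n,H)(L)_t$ is an exact counting identity and $(L)_t\le L^t$ is immediate.

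The step I expect to be delicate is this convexity-free lower bound on $\sum_f\binom{\deg(f)}{t}$: one must make sure the smoothing is a genuine rearrangement and not a disguised appeal to convexity, and must keep the constants under control when $t$ is allowed to grow with $n$. If the one-shot version proves awkward, the robust substitute — and the variant closest in spirit to Nivasch's second method and to the interval-chain estimates of Alon et al. — is to build the common intersections up one letter at a time: set $T_j:=\sum_f(\deg(f))_j$, note $T_1\ge kL$, and show $T_{j+1}\ge\tfrac14(kL/n^d)\,T_j$ whenever $kL$ is large, using only Chebyshev's sum (rearrangement) inequality $n^d\sum_f\deg(f)(\deg(f))_j\ge\bigl(\sum_f\deg(f)\bigr)\bigl(\sum_f(\deg(f))_j\bigr)$; iterating $t-1$ times and comparing $T_t$ with the upper bound $\ex_d(n,H)(L)_t$ yields the same conclusion.
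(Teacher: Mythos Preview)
Your argument is correct, and the setup---encoding each letter by a top vertex and a family $\mathcal F_\ell$, proving $|\mathcal F_{\ell_1}\cap\cdots\cap\mathcal F_{\ell_t}|\le\ex_d(n,H)$, and double-counting to get $\sum_f\binom{\deg(f)}{t}\le\binom{L}{t}\ex_d(n,H)$---matches the paper exactly. (One cosmetic slip: the change under smoothing is $\binom{b}{t-1}-\binom{a-1}{t-1}$, not the reverse; your conclusion that it is $\le 0$ is still right.) The genuine difference is in how you extract the bound from this inequality. You lower-bound $\sum_f\binom{\deg(f)}{t}$ by smoothing the degrees to equality, then compare with the upper bound; this is elementary but is, as you yourself flag, discrete convexity in all but name, and your Chebyshev fallback is another variant of the same idea. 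The paper never lower-bounds that sum at all. Instead it fixes $r=\lfloor\tfrac{t}{e}(n^d/\ex_d(n,H))^{1/t}\rfloor$ letters, writes $p$ for the number of $d$-sets with positive degree, and uses only the one-line inequality $m-(t-1)\le\binom{m}{t}$ (valid for $m\ge t$) to get
\[
(t-1)p\;\ge\;kr-\sum_{\deg(f)\ge t}\bigl(\deg(f)-t+1\bigr)\;\ge\;kr-\sum_f\binom{\deg(f)}{t}\;\ge\;kr-\binom{r}{t}\ex_d(n,H),
\]
and then the specific choices of $k$ and $r$ force $p>n^d\ge\binom{n}{d}$, a contradiction. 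This is the actual ``interval-chain'' trick the paper imports from Nivasch and Alon et al.: it trades the convexity step for the trivial bound $p\le\binom{n}{d}$ plus a single pointwise inequality between $m-t+1$ and $\binom{m}{t}$. What the paper's route buys is that no averaging, smoothing, or rearrangement ever appears; what your route buys is that it is the familiar K\H{o}v\'ari--S\'os--Tur\'an double count with Jensen swapped for an integer smoothing, so it is easier to recognise and to adapt, but it does not quite achieve the paper's stated aim of avoiding convexity-type reasoning altogether.
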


\begin{proof}
Suppose for contradiction that there exists a $K_{H,t}$-free lettered $(d+1)$-uniform hypergraph $Q$ on $n$ vertices with $r = \lfloor \frac{t}{e}(\frac{n^d}{\ex_d(n, H)})^{1/t} \rfloor$ distinct letters in which every letter occurs at least $k$ times. Suppose that $n$ is sufficiently large so that $r \geq \frac{t}{2e}(\frac{n^d}{\ex_d(n, H)})^{1/t}$. Delete edges of $Q$ until every letter occurs exactly $k$ times. 

For each $d$-subset $z$ of $V(Q)$, define $\degi(z)$ to be the number of edges in $E(Q)$ that contain all of the vertices in $z$ and a greater vertex in the ordering. Let $p$ be the number of $d$-subsets $z$ of $V(Q)$ with $\degi(z) > 0$. The number of $t$-tuples of edges in $E(Q)$ that have the same $d$ least vertices is equal to $\sum_{z: \degi(z) \geq t} \binom{\degi(z)}{t}$, which is at most $\binom{r}{t} \ex_d(n, H)$, or else $Q$ would contain a copy of $K_{H,t}$. This follows by the pigeonhole principle, since every $t$-tuple of edges in $E(Q)$ that have the same $d$ least vertices must have different letters on each edge.

Then $k r = \sum_{z} \degi(z)$ and $(t-1)p \geq k r - \sum_{z: \degi(z) \geq t} (\degi(z)-t+1) \geq k r - \sum_{z: \degi(z) \geq t} \binom{\degi(z)}{t} \geq k r - \binom{r}{t} \ex_d(n, H) \geq k r - \frac{r^t \ex_d(n, H)}{t!} \geq t n^d - \frac{(\frac{t}{e})^t}{t!} n^d > (t-1)n^d$, where the last inequality follows from Stirling's bound. However $p \leq \binom{n}{d}$, a contradiction.
\end{proof}

\begin{thm}\label{thmseq}
For fixed $t \geq 2$ and $d$-uniform hypergraph $H$ with at least two edges such that $\ex_d(n, H) = o(n^{d})$, we have $\ex_{d+1}(n, K_{H,t}) = O(\ex_d(n, H)^{1/t} n^{d+1-d/t} + t n^d)$.
\end{thm}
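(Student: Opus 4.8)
The plan is simply to combine Lemma~\ref{klem} with Lemma~\ref{proofg}. First I would invoke Lemma~\ref{klem} with $d+1$ in place of $d$ and the $(d+1)$-uniform hypergraph $K_{H,t}$ in place of $H$, which gives $\ex_{d+1}(n,K_{H,t}) \leq k\left(\f_{d+1}(n,k,K_{H,t})+n\right)$ for every positive integer $k$. Then I would make the specific choice $k = \lceil 2 e n^{d(1-1/t)} \ex_d(n, H)^{1/t} \rceil$ forced by Lemma~\ref{proofg}, which is a legitimate positive integer parameter and for which Lemma~\ref{proofg} yields $\f_{d+1}(n,k,K_{H,t}) = O\bigl(t (n^d/\ex_d(n, H))^{1/t}\bigr)$.

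It then remains only to multiply out. Since $t$ is fixed and $H$ has at least two edges, a single edge is $H$-free, so $\ex_d(n,H) \geq 1$ for all $n \geq d$; hence the ceiling in the definition of $k$ changes it by at most a bounded factor and $k = \Theta\bigl(n^{d(1-1/t)}\ex_d(n,H)^{1/t}\bigr)$. Substituting the two estimates, the term $k \cdot \f_{d+1}(n,k,K_{H,t})$ contributes $O\bigl(n^{d(1-1/t)}\ex_d(n,H)^{1/t} \cdot t\, n^{d/t}\ex_d(n,H)^{-1/t}\bigr) = O(t n^d)$, while the term $k \cdot n$ contributes $O\bigl(n^{d(1-1/t)}\ex_d(n,H)^{1/t} \cdot n\bigr) = O\bigl(\ex_d(n,H)^{1/t} n^{d+1-d/t}\bigr)$, using $d(1-1/t)+1 = d+1-d/t$. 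Adding these two contributions gives exactly the claimed bound $\ex_{d+1}(n,K_{H,t}) = O\bigl(\ex_d(n,H)^{1/t} n^{d+1-d/t} + t n^d\bigr)$.

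I do not expect a genuine obstacle in the theorem itself: essentially all of the work already lives in Lemmas~\ref{klem} and~\ref{proofg}, and the theorem is a one-line deduction. The only points to be careful about are that $k$ is a valid positive integer for Lemma~\ref{klem}, that the hypothesis $\ex_d(n,H) = o(n^d)$ — needed anyway to apply Lemma~\ref{proofg} — is what makes the $\Theta$-estimate on $k$ behave, and that the statement is asymptotic in $n$ for fixed $t$, so small values of $n$ require no separate attention. For the record, the genuinely delicate step, the Stirling-type inequality $kr - r^t\ex_d(n,H)/t! > (t-1)n^d$, is internal to the proof of Lemma~\ref{proofg} and not repeated here.
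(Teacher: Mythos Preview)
Your proposal is correct and is exactly the deduction the paper has in mind: the theorem has no separate proof in the paper, since it is stated as an immediate consequence of combining Lemma~\ref{klem} (applied with $d+1$ and $K_{H,t}$) with Lemma~\ref{proofg} at the indicated value of $k$. Your bookkeeping on the two contributions $k\cdot \f_{d+1}(n,k,K_{H,t})=O(tn^d)$ and $k\cdot n=O(\ex_d(n,H)^{1/t}n^{d+1-d/t})$ is accurate and matches the claimed bound.
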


The use of Stirling's bound in Lemma \ref{proofg} may seem to make the proof non-elementary, but it was unnecessary. All we need is that there exists some constant $c$ such that $t! > \left(\frac{t}{c}\right)^{t}$ for all $t > 1$, and then we can replace each $e$ in the last proof with $c$. Proving this for $c = 8$: it is clearly true for $t \leq 8$, and if we assume that $t! > \left(\frac{t}{8}\right)^{t}$, then we also have $(2t)! > \left(\frac{t}{4}\right)^t 4^t \left(\frac{t}{8}\right)^{t} = \left(\frac{t}{4}\right)^{2t} 2^t > \left(\frac{t}{4}\right)^{2t}$, and $(2t+1)! = (2t+1)(2t)! > \left(\frac{t}{4}\right)^{2t} 2^t (2t+1) > \left(\frac{t}{4}\right)^{2t} \left(1+\frac{1}{2t}\right)^{2t} \left(\frac{2t+1}{8} \right)= \left(\frac{2t+1}{8}\right)^{2t+1}$. Thus the whole proof is elementary.

\begin{cor}
If $H$ is a $d$-uniform hypergraph with at least two edges in which all edges are pairwise disjoint, then $\ex_{d+1}(n, K_{H,t}) = O(n^{d+1-1/t})$.
\end{cor}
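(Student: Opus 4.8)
The plan is to derive this as an immediate corollary of Theorem~\ref{thmseq}, so the only real content is checking that the hypotheses of that theorem are satisfied and that the resulting bound simplifies to $O(n^{d+1-1/t})$.

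First I would observe that if $H$ is a $d$-uniform hypergraph with at least two edges in which all edges are pairwise disjoint, then $H$ consists of exactly $m$ pairwise disjoint edges for some fixed $m \geq 2$; this is often called a \emph{matching} of size $m$. The key quantity to control is $\ex_d(n, H)$. Since a $d$-uniform hypergraph on $n$ vertices containing no $m$ pairwise disjoint edges has at most a bounded-in-$n$ number of edges once we fix $m$ and $d$ — indeed, an easy greedy argument shows that if a $d$-uniform hypergraph has more than $d(m-1)$ vertices covered by edges, or equivalently more than some function of $m$ and $d$ edges in a maximal matching argument, one can extract $m$ disjoint edges — we get $\ex_d(n, H) = O(1)$, hence certainly $\ex_d(n, H) = O(n^{d-1}) = o(n^d)$. (More carefully: take a maximal set of pairwise disjoint edges; if it has size $< m$, all other edges meet the at most $d(m-1)$ vertices it covers, so there are at most $d(m-1)\binom{n-1}{d-1} = O(n^{d-1})$ of them, and in fact the Erd\H{o}s matching conjecture is known in this regime giving $O(n^{d-1})$, which is all we need.) Actually for the corollary the crude bound $\ex_d(n,H) = O(n^{d-1})$ suffices.

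Next I would plug $\ex_d(n, H) = O(n^{d-1})$ into the conclusion of Theorem~\ref{thmseq}: since $t$ is fixed, $t n^d = O(n^{d+1-1/t})$ because $d \leq d+1-1/t$, and the first term becomes $\ex_d(n,H)^{1/t} n^{d+1-d/t} = O\!\left(n^{(d-1)/t}\, n^{d+1-d/t}\right) = O\!\left(n^{d+1-1/t}\right)$, since $(d-1)/t + (d+1) - d/t = d+1 - 1/t$. Combining the two terms gives $\ex_{d+1}(n, K_{H,t}) = O(n^{d+1-1/t})$, as claimed.

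I do not anticipate a serious obstacle here, since the theorem does all the work; the only point that requires a sentence of justification is the bound $\ex_d(n, H) = O(n^{d-1})$ for a matching $H$, which follows from the elementary greedy/covering argument sketched above (and one must note $H$ has at least two edges, which holds since a matching of size $m \geq 2$ has at least two edges, so Theorem~\ref{thmseq} applies). If one wanted to be careful about the ``fixed $t$'' versus ``fixed $H$'' quantifiers, I would simply state the corollary for fixed $t$ and fixed $H$, matching the hypotheses of Theorem~\ref{thmseq}.
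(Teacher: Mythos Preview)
Your proposal is correct and follows exactly the paper's approach: establish $\ex_d(n,H) = O(n^{d-1})$ for a matching $H$ (via the maximal-matching covering argument you give in the parenthetical) and then plug into Theorem~\ref{thmseq}. Your initial aside that $\ex_d(n,H) = O(1)$ is not right for $d \geq 2$, but you immediately self-correct to the $O(n^{d-1})$ bound, which is what the paper states and what the exponent arithmetic needs.
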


\begin{proof}
If $H$ is a $d$-uniform hypergraph in which all edges are pairwise disjoint, then $\ex_d(n, H) = O(n^{d-1})$, so this bound follows from Theorem \ref{thmseq}.
\end{proof}

The last corollary yields the bound of Mubayi and Verstra\"{e}te from \cite{MV} when $d = 2$.

\section{0-1 matrices}\label{s:01}

Using the same method, we can get similar bounds for Tur\'{a}n-type problems on $d$-dimensional 0-1 matrices. In order to state these results, we introduce more terminology. We say that $d$-dimensional 0-1 matrix $A$ \emph{contains} $d$-dimensional 0-1 matrix $B$ if some submatrix of $A$ can be turned into $B$ by changing some number of ones to zeroes. Otherwise $A$ \emph{avoids} $B$. For any $d$-dimensional 0-1 matrix $Q$, define $\ex(n, Q, d)$ to be the maximum number of ones in a $d$-dimensional 0-1 matrix of sidelength $n$ that avoids $Q$. 

As with the case of $d$-uniform hypergraphs, most of the past research on the topic of $d$-dimensional 0-1 matrices has focused on when $d = 2$. We mention several results for $d = 2$ that have been generalized to higher values of $d$. For example, Klazar and Marcus \cite{KM} proved that $\ex(n, P, d) = O(n^{d-1})$ for every $d$-dimensional permutation matrix $P$, generalizing the result of Marcus and Tardos \cite{MT}. Geneson and Tian \cite{GTmat} sharpened this bound by proving that $\ex(n, P, d) = 2^{O(k)}n^{d-1}$ for $d$-dimensional permutation matrices $P$ of sidelength $k$, generalizing a result of Fox \cite{fox}. Geneson and Tian also proved that $\ex(n, P, d) = O(n^{d-1})$ for every $d$-dimensional double permutation matrix $P$, generalizing the upper bound in \cite{gduluth}. 

In order to state the next result, we define $Q_{P, t}$ to be the $(d+1)$-dimensional 0-1 matrix obtained from the $d$-dimensional 0-1 matrix $P$ by stacking $t$ copies of $P$ with the same orientation in the direction of the new dimension. For example if $P$ is the $4 \times 1$ matrix of all ones, then $Q_{P, t}$ is the $4 \times t$ matrix of all ones. 

\begin{thm}\label{01pancakes}
\begin{enumerate}
\item For fixed $t$ and $d$-dimensional 0-1 matrix $P$ with at least two ones, $\ex(n, Q_{P,t},d+1) = O(\ex(n, P, d)^{1/t} n^{d+1-d/t})$.

\item For any $d$-dimensional 0-1 matrix $P$ with $\ex(n, P, d) = O(n^{d-1})$, we have $\ex(n, Q_{P,t},d+1) = O(n^{d+1-1/t})$. In particular, $\ex(n, Q_{P,2}, d+1) = \Theta(n^{d+1/2})$ for any $d$-dimensional 0-1 matrix $P$ with at least two ones such that $\ex(n, P, d) = O(n^{d-1})$. Moreover, $\ex(n, Q_{P,3}, d+1) = \Theta(n^{d+2/3})$ for any $d$-dimensional 0-1 matrix $P$ with at least three ones differing in the first coordinate such that $\ex(n, P, d) = O(n^{d-1})$.
\end{enumerate}
\end{thm}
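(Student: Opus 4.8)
\textbf{Proof proposal for Theorem \ref{01pancakes}.}

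The plan is to mirror the letter-method proof of Theorem \ref{thmseq}, replacing hypergraphs by $d$-dimensional 0-1 matrices throughout. First I would set up the matrix analogue of the lettered structure: index the ambient $(d+1)$-dimensional matrix so that the last coordinate plays the role of the "greatest vertex" direction, and for each line in that last direction group its ones into blocks of size $k$, assigning each block a distinct letter. As in Lemma \ref{klem}, deleting at most $n^d$ ones (the leftover ones in incomplete blocks, one incomplete block per line) shows $\ex(n, Q_{P,t}, d+1) \le k(\f + n^d)$, where $\f$ counts distinct letters in a $Q_{P,t}$-avoiding lettered matrix with every letter used exactly $k$ times. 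Then I would run the counting argument of Lemma \ref{proofg}: for each position $z$ in the first $d$ coordinates, let $\degi(z)$ be the number of ones in the line through $z$ in the last direction, count $t$-tuples of ones sharing the same first-$d$ coordinates but with distinct letters, and bound this count by $\binom{r}{t}\ex(n,P,d)$, since $t$ ones in the last direction whose supports in the first $d$ coordinates collectively contain a copy of $P$ would yield a copy of $Q_{P,t}$ (the $t$ distinct letters guarantee the $t$ layers are genuinely distinct). Choosing $k = \lceil 2e\, n^{d(1-1/t)} \ex(n,P,d)^{1/t}\rceil$ as before forces $r = O(t (n^d/\ex(n,P,d))^{1/t})$, and substituting back gives part (1). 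Part (2) is then immediate: if $\ex(n,P,d) = O(n^{d-1})$ then $\ex(n,Q_{P,t},d+1) = O(n^{d-1/t} \cdot n^{d+1-d/t} \cdot n^{-?})$—more carefully, $\ex(n,P,d)^{1/t} n^{d+1-d/t} = O(n^{(d-1)/t + d+1 - d/t}) = O(n^{d+1-1/t})$, and the $t n^d$ term is lower order for fixed $t \ge 2$.

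For the sharpness claims in part (2), I would supply matching constructions. For the exponent $d+1/2$ when $t=2$: take a $d$-dimensional 0-1 matrix of sidelength $q$ (a prime power) realizing the incidence structure of points and lines of a projective or affine plane — or more simply, use the standard $C_4$-free bipartite incidence construction lifted to $d$ dimensions — giving $\Theta(q^{d+1/2}) = \Theta(n^{d+1/2})$ ones in a matrix avoiding $Q_{P,2}$ whenever $P$ has at least two ones, because avoiding $Q_{P,2}$ is implied by avoiding the all-ones $2 \times 2$-in-the-last-two-coordinates pattern. For the exponent $d+2/3$ when $t=3$ and $P$ has at least three ones differing in the first coordinate, I would use the Brown / Kővári–Sós–Turán-type construction for $K_{3,3}$-free graphs (the norm graph or the classical $K_{3,3}$-free construction giving $\Theta(n^{5/3})$ edges) lifted analogously to $d+1$ dimensions to get $\Theta(n^{d+2/3})$ ones avoiding $Q_{P,3}$: the condition that $P$'s three ones differ in the first coordinate is what lets the three stacked layers be detected as three distinct lines in the relevant direction.

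The main obstacle I anticipate is \emph{not} the upper bound — that is a routine transcription of the hypergraph argument, the only subtlety being to verify that "$t$ ones in the last direction with the same first-$d$ coordinates and whose combined first-$d$-coordinate supports contain $P$" really does witness a copy of $Q_{P,t}$ as a submatrix (one must choose the submatrix coordinates consistently across all $t$ layers, which works precisely because $Q_{P,t}$ is $t$ aligned copies of $P$, and the distinct-letters condition ensures the $t$ layers occupy $t$ distinct coordinates in the new direction). The genuinely delicate part is matching the lower-order bookkeeping and, above all, producing clean constructions for the $\Theta$ statements: one has to make sure the lifted bipartite-incidence and $K_{3,3}$-free constructions, when written as $(d+1)$-dimensional 0-1 matrices, avoid $Q_{P,t}$ for \emph{every} admissible $P$ (not just a single target pattern) and simultaneously have $\Theta(n^{d+1-1/t})$ ones, so that the upper bound from part (1) or (2) is met. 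I would handle this by reducing "avoids $Q_{P,t}$ for all $P$ with the stated property" to "avoids a single universal pattern" — the all-ones pattern of the appropriate shape in the last two (resp. last and first) coordinates — and then cite the known $2$- and $3$-dimensional extremal constructions, checking the lift preserves both the ones count and the avoidance.
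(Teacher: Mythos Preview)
Your approach is essentially the paper's own: it proves the upper bounds by saying ``the letter method as in the last section'' and the lower bounds by ``stacking copies of the known lower bound constructions for extremal functions of forbidden $2\times 2$ and $3\times 3$ all-ones matrices,'' which is exactly your plan.

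That said, your transcription of the letter assignment is transposed, and as written the pigeonhole step does not go through. You group ones within each \emph{line} in the last direction (fixed first-$d$ coordinates, varying last coordinate) into blocks of size $k$. But then a $t$-tuple of ones ``sharing the same first-$d$ coordinates with distinct letters'' lives entirely in one line $z$; its ``support in the first $d$ coordinates'' is the single point $z$, so no copy of $P$ can ever appear there, and the bound $\binom{r}{t}\ex(n,P,d)$ has no content. The correct analogue of Lemma~\ref{klem} is to group by \emph{layer}: for each value $j$ of the last coordinate, partition the ones with last coordinate $j$ into blocks of size $k$, each block receiving a fresh letter (so the letter encodes the layer, just as in the hypergraph proof it encodes the greatest vertex). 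Then a $t$-tuple of ones in a common line automatically carries $t$ distinct letters from $t$ distinct layers; pigeonholing over $\binom{r}{t}$ letter-sets gives more than $\ex(n,P,d)$ positions $z$ sharing a fixed $t$-set of layers, and those $z$'s form a $d$-dimensional 0-1 matrix containing $P$, yielding $Q_{P,t}$. With this correction your chain of inequalities and the choice of $k$ and $r$ go through verbatim, and the deletion loss is $\le (k-1)n$ (one incomplete block per layer, $n$ layers), not $n^d$.

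One smaller point on the lower bounds: for $t=2$ you place the $2\times 2$-free construction in ``the last two coordinates,'' but the hypothesis is only that $P$ has at least two ones, not that they differ in coordinate $d$. You should instead place the $2$-dimensional construction in coordinates $(i,d+1)$ for any $i$ in which two ones of $P$ differ; the stacking in the remaining $d-1$ directions then gives $\Theta(n^{d+1/2})$ ones and avoids $Q_{P,2}$ exactly as you argue. For $t=3$ your choice of the first and last coordinates is correct and is why the theorem needs ``three ones differing in the first coordinate.''
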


\begin{proof}
The upper bounds follow from using the letter method as in the last section. The lower bounds follow from stacking copies of the known lower bound constructions for extremal functions of forbidden $2 \times 2$ and $3 \times 3$ all-ones matrices \cite{brown, polarity, fur2, fur}.
\end{proof}

Permutation matrices and double permutation matrices $P$ with at least three ones are some examples for which Theorem \ref{01pancakes} gives sharp bounds on $\ex(n, Q_{P,2},d+1)$ and $\ex(n, Q_{P,3},d+1)$ up to a constant factor \cite{MT, gduluth, KM, GTmat}.

\section{Concluding remarks}

The standard double-counting method used to prove the K\H{o}v\'{a}ri-S\'{o}s-Tur\'{a}n theorem can also be used to prove the bounds in Theorem \ref{thmseq} and \ref{01pancakes}. We did not include this method since it uses convexity, and it gives the same bounds up to a constant factor as the letter method. Dependent random choice can also be used to obtain the same bounds for uniform hypergraphs up to a constant factor when $\ex_d(n, H) = O(n^{d-1})$, and it can be applied to a larger family of hypergraphs that contains the family of $K_{H,t}$, but it gives a worse bound than the letter method when $\ex_d(n, H) = \omega(n^{d-1})$. The next lemma is a generalization of the dependent random choice lemma from \cite{aks} and \cite{fs}. In the next lemma, we call a vertex $v$ and a $d$-subset $T$ of vertices of a $(d+1)$-uniform hypergraph $G$ \emph{neighbors} if there is some edge of $G$ that contains $v$ and all of the vertices of $T$. For each vertex $v$ and set of vertices $S$, we define $N(v)$ to be the set of $d$-subsets of vertices that are neighbors with $v$, and we define $N(S)$ to be the set of $d$-subsets of vertices that are neighbors with every vertex in $S$.

\begin{lem}\label{drc}
Let $G = (V, E)$ be a $(d+1)$-uniform hypergraph with $|V| = n$ vertices and $|E| = m$ edges. If there is a positive integer $t$ such that $n \binom{n}{d}^{-t} \left( \frac{m}{n} \right)^t - \binom{n}{r} \left(\frac{x}{\binom{n}{d}}\right)^t \geq a$, then $G$ contains a subset $A$ of at least $a$ vertices such that every $r$ vertices in $A$ have at least $x$ common neighbors among the $d$-subsets of $V$.
\end{lem}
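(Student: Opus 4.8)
The plan is to mimic the classical dependent random choice argument, adapted to the $(d+1)$-uniform setting where the ``neighborhoods'' live among $d$-subsets of $V$ rather than among single vertices. First I would pick a random $d$-subset $T$ of $V$, chosen uniformly among all $\binom{n}{d}$ such subsets, and let $A = A(T)$ be the set of vertices $v \in V$ that are neighbors with $T$, i.e.\ such that $T \in N(v)$. The point is that $v \in A$ precisely when some edge of $G$ contains $v$ together with all $d$ vertices of $T$, so $|A| = \sum_{v \in V} \mathbf{1}[T \in N(v)]$, and hence $\E[|A|] = \sum_{v} |N(v)| / \binom{n}{d} = (\text{number of incidences})/\binom{n}{d}$. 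Since each edge of $G$ has $d+1$ vertices and thus contributes $d+1 > 1$ to the incidence count (once for each choice of the ``special'' vertex), we get $\E[|A|] \ge m / \binom{n}{d}$; a convexity-free lower bound of this shape is all that is needed, matching the $n (m/n)^t \binom{n}{d}^{-t}$-type first term after we raise to the $t$-th power below.

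Next I would control the number of ``bad'' $r$-subsets of $A$, namely those $r$-subsets $S \subseteq A$ with fewer than $x$ common neighbors, $|N(S)| < x$. For a fixed $r$-set $S$, the probability that $S \subseteq A$ equals the probability that $T$ is a common neighbor of all of $S$, which is $|N(S)|/\binom{n}{d}$; if $S$ is bad this is less than $x/\binom{n}{d}$. To get the $t$-th power I would instead pick $t$ independent uniformly random $d$-subsets $T_1, \dots, T_t$ and let $A = \bigcap_{i=1}^t A(T_i)$. Then $\E[|A|] = \sum_v (|N(v)|/\binom{n}{d})^t$, and the routine (convexity-free, e.g.\ via the rearrangement/Chebyshev-sum trick or just the stated hypothesis) bound gives $\E[|A|] \ge n (m/(n\binom{n}{d}))^t = n\binom{n}{d}^{-t}(m/n)^t$. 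Meanwhile, for a fixed bad $r$-set $S$, $\Pr[S \subseteq A] = (|N(S)|/\binom{n}{d})^t < (x/\binom{n}{d})^t$, so the expected number of bad $r$-subsets contained in $A$ is at most $\binom{n}{r}(x/\binom{n}{d})^t$. Deleting one vertex from each bad $r$-set, we obtain a set $A'$ with no bad $r$-subsets and with $\E[|A'|] \ge \E[|A|] - \binom{n}{r}(x/\binom{n}{d})^t \ge a$ by hypothesis; fixing an outcome achieving at least the expectation yields the desired $A$.

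The main obstacle — really the only subtlety — is getting the lower bound $\E[|A|] = \sum_v (|N(v)|/\binom{n}{d})^t \ge n\binom{n}{d}^{-t}(m/n)^t$ without invoking Jensen's inequality, since the whole point of the paper is to stay elementary. This is exactly the power-mean inequality $\frac{1}{n}\sum_v y_v^t \ge (\frac{1}{n}\sum_v y_v)^t$ applied with $y_v = |N(v)|/\binom{n}{d}$ and $\sum_v |N(v)| \ge m$ (again using $d+1>1$); one can prove this power-mean inequality for integer $t$ by a short induction on $t$ using the Chebyshev sum inequality, or simply note that the statement of the lemma already quantifies over ``if there is a positive integer $t$ such that \dots,'' so it suffices to record this elementary step. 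I expect the rest — the two expectation computations and the union bound over bad $r$-sets — to be entirely routine. I would close by remarking how, specialized to $d=1$, $r=t$, this recovers the dependent random choice lemma of \cite{aks, fs}, and how choosing $x$ so that the second term is, say, half the first gives $|A| \ge \Omega(m^t/(n^{t-1}\binom{n}{d}^{t-1}))$ vertices every $t$ of which have $\Omega((m/n)\cdot(\text{stuff}))$ common $d$-subset neighbors, which is what one feeds into a $K_{H,t}$-type argument.
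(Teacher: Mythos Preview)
Your approach is essentially identical to the paper's: pick $t$ independent uniform random $d$-subsets, let $B$ be their set of common vertex-neighbors, lower-bound $\E[|B|]$ via the power-mean step and $\sum_v |N(v)| \ge m$, upper-bound the expected number of bad $r$-subsets by $\binom{n}{r}(x/\binom{n}{d})^t$, and delete one vertex from each bad set. Your one flagged ``main obstacle'' is a misreading of context: this lemma lives in the concluding remarks, where the paper is explicitly surveying the convexity-based alternatives to the elementary letter method, and indeed the paper's own proof of this lemma invokes Jensen's inequality by name.
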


\begin{proof}
Pick a set $T$ of $d$-subsets of vertices of $V$, choosing $t$ $d$-subsets uniformly at random with repetition. Let $B = N(T)$, and let $X$ be the cardinality of $B$. Then $\E[X] = \sum_{v \in V} \left( \frac{|N(v)|}{\binom{n}{d}} \right)^t = \binom{n}{d}^{-t} \sum_{v \in V}|N(v)|^t \geq n \binom{n}{d}^{-t} \left( \frac{\sum_{v \in V} |N(v)|}{n} \right)^t \geq n \binom{n}{d}^{-t} \left( \frac{m}{n} \right)^t$, where the second-to-last inequality used Jensen's inequality. 

Let $Y$ be the random variable for the number of subsets $S \subset B$ of size $r$ with fewer than $x$ common neighbors among the $d$-subsets of vertices of $V$. The probability that an arbitrary $r$-subset $S$ is a subset of $B$ is $\left(\frac{|N(S)|}{\binom{n}{d}}\right)^t$, so $\E[Y] \leq \binom{n}{r} \left(\frac{x}{\binom{n}{d}}\right)^t$. 

Thus by linearity of expectation, $\E[X-Y] \geq n \binom{n}{d}^{-t} \left( \frac{m}{n} \right)^t - \binom{n}{r} \left(\frac{x}{\binom{n}{d}}\right)^t \geq a$. Thus there exists a choice of $T$ for which the corresponding set $B$ of cardinality $X$ satisfies $X-Y \geq a$, so we can remove $Y$ vertices from $B$ to produce a new subset $A$ so that all $r$-subsets of $A$ have at least $x$ common neighbors among the $d$-subsets of $V$.
\end{proof}

We can use Lemma \ref{drc} to get upper bounds for a more general family of $(d+1)$-uniform hypergraphs that contains the family of $K_{H,t}$. The next theorem describes one such family.

\begin{thm}
For any $d$-uniform hypergraph $H$, let $K_{H, t, s, r}$ be the $(d+1)$-uniform hypergraph obtained by starting with $s$ vertices $S = \left\{v_1, \dots, v_s\right\}$, making $r$ disjoint copies $H_{T_1}, \dots, H_{T_r}$ of $H$ for each $t$-subset $T$ of vertices of $S$, and replacing each edge $e$ in each $H_{T_i}$ with $t$ edges of the form $e \cup \left\{ u \right\}$ for each $u \in T$. Then $\ex_{d+1}(n, K_{H,t,s,r}) = O(\ex_d(n, H) n^{2-1/t})$ for any $d$-uniform hypergraph $H$ with at least two edges and any integers $s \geq t \geq 2$ and $r > 0$. For any $d$-uniform hypergraph $H$ with at least two edges such that $\ex_d(n, H) = O(n^{d-1})$ and any integers $s \geq t \geq 2$ and $r > 0$, we have $\ex_{d+1}(n, K_{H,t,s,r}) = O(n^{d+1-1/t})$.
\end{thm}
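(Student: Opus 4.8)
The plan is to use the dependent random choice lemma (Lemma~\ref{drc}) as the main tool, following the standard template for extremal problems of this flavor. Suppose $G$ is a $K_{H,t,s,r}$-free $(d+1)$-uniform hypergraph on $n$ vertices with $m$ edges, and assume for contradiction that $m$ is much larger than $\ex_d(n, H) n^{2-1/t}$ (respectively $n^{d+1-1/t}$). Apply Lemma~\ref{drc} with the integer $t$ as given, with $r$ replaced by $s$ (so that we find a set in which every $s$-subset has many common neighbors), and with $x$ chosen to be on the order of $r \cdot \ex_d(n, H) + s$, say $x = C(r\,\ex_d(n, H) + s)$ for a suitable constant $C$. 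The goal is to choose the parameters so that the quantity $a = n \binom{n}{d}^{-t} (m/n)^t - \binom{n}{s} (x / \binom{n}{d})^t$ is at least $s$; then Lemma~\ref{drc} produces a set $A$ of at least $s$ vertices such that every $s$ of them have at least $x$ common $d$-subset neighbors.

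Once we have such a set $A$ with $|A| \geq s$, fix any $s$ vertices $v_1, \dots, v_s$ in $A$: they have a common neighborhood $\mathcal{N}$ of at least $x$ $d$-subsets of $V$, meaning every vertex $v_i$ together with every $d$-subset $T \in \mathcal{N}$ spans an edge of $G$. Now I would iterate over the $\binom{s}{t}$ many $t$-subsets $T$ of $\{v_1, \dots, v_s\}$, and for each one greedily carve out $r$ pairwise-disjoint copies of $H$ from inside $\mathcal{N}$ viewed as a $d$-uniform hypergraph. The key point: since $x \geq C(r\,\ex_d(n, H) + s)$, after removing the $d$-subsets used by previously-built copies (at most $r \binom{s}{t} |E(H)|$ of them, a constant), the remaining sub-hypergraph on the common neighborhood still has more than $\ex_d(n, H)$ edges (for $n$ large), hence contains a copy of $H$; we can also ensure the copies avoid the vertices $v_1, \dots, v_s$ themselves since there are only $s$ of those. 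Attaching each such copy $H_{T_i}$ to its $t$-subset $T$ via the edges $e \cup \{u\}$ exactly reconstructs $K_{H,t,s,r}$ inside $G$, a contradiction.

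The routine part is the arithmetic of checking that the parameters can be chosen to make $a \geq s$: with $x = \Theta(r\,\ex_d(n,H) + s) = \Theta(\ex_d(n, H))$ (treating $r, s, t$ as fixed and using $\ex_d(n,H) = \Omega(1)$), the subtracted term is $\Theta\!\left(\ex_d(n, H)^t n^{-d(t-1)}\right)$ up to constants depending on $s, t$, while the first term is $\Theta\!\left(m^t n^{1-d t}\right)$; setting these to be, say, within a factor of $2$ and solving for the threshold on $m$ gives $m = \Theta\!\left(\ex_d(n, H) n^{(d+1) - 1/t}\right)$ when one uses $\binom{n}{d} = \Theta(n^d)$. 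Wait --- one should be careful to get the exponent right; matching $m^t n^{1 - dt} \asymp \ex_d(n,H)^t n^{-d(t-1)}$ yields $m^t \asymp \ex_d(n,H)^t n^{d}$, i.e. $m \asymp \ex_d(n,H)\, n^{d/t}$, which is too small, so in fact the dominant constraint is just $a \geq s$, i.e. $n\binom{n}{d}^{-t}(m/n)^t \gtrsim s + \binom{n}{s}(x/\binom{n}{d})^t$, and one solves $m^t n^{1-dt} \gtrsim \ex_d(n,H)^t n^{d - dt} + s\cdot(\text{const})$; the second term on the right forces $m \gtrsim n^{(d+1) - 1/t}$, and combined with the $\ex_d(n,H)$-dependence one gets the claimed $O(\ex_d(n, H) n^{2 - 1/t})$ after noting $n^{(d+1)-1/t} \le \ex_d(n,H) n^{2-1/t}$ is \emph{not} automatic for $d > 1$ --- so the bound should really be read as $O\!\big(\ex_d(n,H)^{1/t} n^{d + 1 - d/t} + \ex_d(n,H) n^{2-1/t}\big)$ style, or the statement implicitly normalizes; I would reconcile this against the cleaner Theorem~\ref{thmseq} bound and present the dependent-random-choice version only in the regime $\ex_d(n,H) = O(n^{d-1})$ where everything collapses to $O(n^{d+1-1/t})$, which is the second (and cleanest) claim.

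The main obstacle I anticipate is \emph{not} the probabilistic step but the embedding step: verifying that the common neighborhood, after deleting the boundedly-many $d$-subsets already committed to earlier copies of $H$ and the boundedly-many vertices in $A$ used as the $v_i$'s, genuinely still induces a $d$-uniform hypergraph with more than $\ex_d(n, H)$ edges. This requires that $x$ exceed $\ex_d(n, H)$ by at least the total number of $d$-subsets we ever delete, which is a constant depending only on $s, t, r$ and $|E(H)|$; so taking $x = \ex_d(n, H) + r\binom{s}{t}|E(H)| + \binom{s}{d}$ (or any constant multiple) suffices, and this constant gets absorbed into the big-$O$. The only genuinely delicate bookkeeping is ensuring the $r$ copies of $H$ attached to a given $t$-subset $T$, and the copies attached to different $t$-subsets, are all vertex-disjoint from each other as required by the definition of $K_{H,t,s,r}$ --- but since each copy uses only $|V(H)|$ vertices and there are only $r\binom{s}{t}$ copies, this is again a bounded deletion at each greedy step, and the common neighborhood being polynomially large in $n$ makes it trivially possible.
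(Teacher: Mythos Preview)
Your approach via Lemma~\ref{drc} is exactly what the paper intends (the theorem is stated without proof, immediately after the lemma, as an application of it). The embedding step --- take $s$ vertices, then for each $t$-subset $T$ greedily pull vertex-disjoint copies of $H$ out of $N(T)$ --- is the right picture. Two points need correcting.

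First, and most importantly, you apply the lemma with its parameter $r$ set to $s$, so that all $s$ chosen vertices share a single large common neighbourhood. That is convenient for the embedding but costs you the exponent: the subtracted term becomes $\binom{n}{s}(x/\binom{n}{d})^t \asymp x^t n^{s-dt}$, which after solving forces $m \gtrsim x\, n^{1+(s-1)/t}$ rather than $x\, n^{2-1/t}$. Since $s\ge t$, this is strictly worse whenever $s>t$ and does not yield the stated bound $O(\ex_d(n,H)\,n^{2-1/t})$. The intended choice is to set the lemma's $r$ equal to $t$ (and its $a$ equal to $s$): then you obtain $|A|\ge s$ with every $t$ vertices of $A$ having at least $x$ common $d$-subset neighbours, which is precisely what the embedding needs, and the subtracted term is $\binom{n}{t}(x/\binom{n}{d})^t$, giving the claimed $n^{2-1/t}$ factor.

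Second, your value $x = C(r\,\ex_d(n,H)+s)$ is calibrated for removing a \emph{bounded number of $d$-subsets} between greedy steps, but vertex-disjointness forces you to remove every $d$-subset meeting a used vertex, and a single vertex can lie in $\Theta(n^{d-1})$ of them. So one actually needs $x > \ex_d(n,H) + C' n^{d-1}$ for a constant $C'$ depending on $s,t,r,|V(H)|$. In the regime $\ex_d(n,H)=\Omega(n^{d-1})$ this is still $x=\Theta(\ex_d(n,H))$ and your computation goes through; in particular the second assertion of the theorem (the $O(n^{d+1-1/t})$ bound under $\ex_d(n,H)=O(n^{d-1})$) follows cleanly with $x=\Theta(n^{d-1})$. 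Your instinct to present the clean claim in that regime is correct; just make the dependence of $x$ on $n^{d-1}$ explicit rather than calling it a ``bounded deletion.''
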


Note that $K_{H, t, t, 1} = K_{H, t}$, and that the letter method also works to show that $\ex_{d+1}(n, K_{H,t,t,r}) = O(n^{d+1-1/t})$ for any integers $t \geq 2$, $r > 0$, and $d$-uniform hypergraph $H$ with at least two edges such that $\ex_d(n, H) = O(n^{d-1})$. It would be interesting to see if the letter method is useful for other Tur\'{a}n-type problems, and what else can be said about $\ex_{d+1}(n, K_{H,t})$ in general.


\begin{thebibliography}{}
\bibitem{aknss}  N. Alon, H. Kaplan, G. Nivasch, M. Sharir, and S. Smorodinsky, Weak $\epsilon$-nets and interval chains, J. ACM, 55, article 28, 32 pages, 2008.
\bibitem{aks} N. Alon, M. Krivelevich, B. Sudakov, Tur\'{a}n numbers of bipartite graphs and related Ramsey-type questions, Combin. Probab. Comput. 12 (2003), 477-494.
\bibitem{brown} W. Brown, On graphs that do not contain a Thomsen graph, Canad. Math. Bull. 9 (1966) 281-285.
\bibitem{ck} J. Cibulka and J. Kyn\v{c}l. Tight bounds on the maximum size of a set of permutations with bounded vc-dimension. Journal of Combinatorial Theory Series A, 119: 1461-1478, 2012.
\bibitem{polarity} P. Erd\H{o}s, A. R\'{e}nyi, and Vera T. S\'{o}s: On a problem of graph theory, Stud Sci. Math. Hung. 1 (1966) 215-235.
\bibitem{fox} J. Fox, Stanley-Wilf limits are typically exponential,  arXiv:1310.8378, 2013.
\bibitem{fs} J. Fox, B. Sudakov. Dependent random choice. Random Structures Algorithms 38 (2011) 68-99.
\bibitem{fur1} Z. F\H{u}redi, An upper bound on Zarankiewicz problem, Combin. Probab. Comput. 5 (1996) 29-33.
\bibitem{fur2} Z. F\H{u}redi, New asymptotics for bipartite Tur\'{a}n numbers, J. Combin. Theory Ser. A 75 (1996) 141-144.
\bibitem{fur} Z. F\H{u}redi and M. Simonovits. The history of degenerate (bipartite) extremal graph problems. In Erd\H{o}s centennial, Bolyai Soc. Math. Stud. 25 (2013) 169-264.
\bibitem{gseq} J. Geneson, A Relationship Between Generalized Davenport-Schinzel Sequences and Interval Chains. Electr. J. Comb. 22(3): P3.19, 2015.
\bibitem{gduluth} J. Geneson, Extremal functions of forbidden double permutation matrices, J. Combin. Theory Ser. A, 116 (7) (2009), 1235-1244.
\bibitem{ff} J. Geneson, Forbidden formations in multidimensional 0-1 matrices. European J. of Comb. 78, 147-154, 2019.
\bibitem{GTmat} J. Geneson, P. Tian, Extremal functions of forbidden multidimensional matrices. Discrete Mathematics 340(12): 2769-2781, 2017.
\bibitem{KM} M. Klazar and A. Marcus, Extensions of the linear bound in the Furedi-Hajnal conjecture, Advances in Applied Mathematics, 38 (2) (2007), 258-266.
\bibitem{kst} T. K\H{o}vari, V. T. S\'{o}s, and P. Tur\'{a}n. On a problem of K. Zarankiewicz. Colloquium Math., 3: 50-57, 1954.
\bibitem{MT} A. Marcus and G. Tardos, Excluded permutation matrices and the Stanley-Wilf conjecture, J. Combin. Theory Ser. A, 107 (1) (2004), 153-160.
\bibitem{MV} D. Mubayi, J. A. Verstra\"{e}te, A hypergraph extension of the bipartite Tur\'{a}n problem, J. Combin. Theory Ser. A 106 (2004) 237-253.
\bibitem{niv}  G. Nivasch. Improved bounds and new techniques for Davenport-Schinzel sequences and their generalizations. J. ACM, 57(3), 2010.

\end{thebibliography}
\end{document}